\def\R{{\mathbb R}}
\numberwithin{equation}{section}
\newtheorem{theorem}{Theorem}[section]
\newtheorem{cor}[theorem]{Corollary}
\newtheorem{example}[theorem]{Example}
\begin{document}

\title{Optimal Multi-Dimensional Stochastic Harvesting with
Density-dependent Prices}
\author{Luis H. R. Alvarez$^{1}$ \and
Edward Lungu$^{2}$ \and
Bernt \O{}ksendal$^{3,4,5}$}
\date{10 May 2015}

\maketitle

\footnotetext[1]{\  Department of Accounting and Finance,
Turku School of Economics,
FIN-20014 University of Turku,
Finland,
e-mail: luis.alvarez@tse.fi}

\footnotetext[2]{\  Department of Mathematics,
University of Botswana,
B.P. 0022 Gaborone,
Botswana,
e-mail: lungu@mopipi.ub.bw}

\footnotetext[3]{\  Department of Mathematics,
University of Oslo,
Box 1053 Blindern,
N--0316 Oslo, Norway,\newline
e-mail: oksendal@math.uio.no
The research leading to these results has received funding from the
European Research Council under the European Community's Seventh
Framework Program (FP7/2007-2013) / ERC grant agreement no [228087].
}

\footnotetext[4]{\
Norwegian School of Economics,
Helleveien 30, N--5045 Bergen,
Norway}
\footnotetext[5]{This research was carried out with support of CAS - Centre for Advanced Study, at the Norwegian Academy of Science and Letters, within the research program SEFE.}

\begin{abstract}
We prove a verification theorem for a class of singular control problems
which model optimal harvesting with density-dependent prices or optimal
dividend policy with capital-dependent utilities. The result is applied
to solve explicitly some examples of such optimal harvesting/optimal
dividend problems.

In particular, we show that if the unit price \emph{decreases} with population density, then the optimal harvesting policy may not exist in the ordinary sense, but can be expressed as a "chattering policy", i.e. the limit as $\Delta x$ and $\Delta t$ go to $0$ of taking out a sequence of small quantities of size $\Delta x$ within small time periods of size $\Delta t$.
\end{abstract}

\paragraph{Keywords:} Optimal harvesting, interacting populations, It\^o diffusions, singular stochastic control, verification theorem, density-dependent prices, chattering policies.

\paragraph{MSC(2010):} Primary 60H10, 93E20. Secondary 91B70, 92D25.

\section{Introduction}

The determination of an optimal harvesting policy of a stochastically fluctuating renewable resource is typically subject to at least three key factors affecting either the intertemporal evolution of the resource stock or the incentives of a rational risk neutral harvester. First, the exact size of the harvested stock evolves stochastically
due to environmental or demographical randomness. Second, the interaction between different populations
has obviously a direct effect on the density of the harvested stocks. Third, most harvesting decisions are subject to density dependent costs and prices. The price of the harvested resource is typically decreasing as a function of the
prevailing stock due to the decreasing marginal utility of consumption. The more abundant a resource gets, the
less consumers are prepared to pay from an extra unit of that particular resource and vice versa. In a completely analogous fashion
the costs associated with harvesting depend typically on the abundance of the harvested resource. The scarcer a resource becomes, the higher are the costs associated with
harvesting due to costly search or other similar factors. Our objective in this study is to investigate the optimal harvesting policy of a risk neutral decision maker facing all the three key factors
mentioned above.
\\
The problem of determining an optimal harvesting policy of a risk neutral decision maker can be viewed as a singular stochastic control problem. In an unstructured
one-dimensional setting where the marginal profitability of a marginal unit of the harvested stock is a constant, the existing literature usually delineates circumstances under which
the optimal harvesting policy is to deplete the entire resource stock immediately or to maintain it at all times below a critical threshold at which the expected present value of the cumulative yield
is maximized (\cite{A1,A3,AS,LES1,LES2, LO1}). As intuitively is clear, the optimal policy is altered as soon as the marginal profitability becomes state-dependent (cf. \cite{A2}) or population
interaction (cf. \cite{LO2}) is incorporated into the analysis. In \cite{A2} it is shown within a one-dimensional setting that the state dependence of the instantaneous yield from harvesting
results into the emergence of circumstances under which the policy resulting into the maximal value constitutes a chattering policy which does not belong into the original class of admissible
càdlàg-harvesting policies. On the other hand, in \cite{LO2} it is shown that the presence of interaction between the harvested resource stocks leads to a harvesting strategy where the decision maker
generically harvests only a single resource at a time.
\\

In this paper we combine the approaches developed in \cite{A2} and \cite{LO2} and consider the problem of determining the optimal harvesting policy from a collection
of interacting populations, described by a coupled system of stochastic differential equations, when the price per unit for each population is allowed to depend on the densities of the populations. In Section 2 we give a general verification theorem for such optimal harvesting problems (Theorem 2.1), and in Section 3 we study in detail some examples where the price is a decreasing function of the density and we show, perhaps surprisingly, that in such cases the optimal harvesting strategy may not exist in the ordinary sense, but can be described as a "chattering policy". See Theorem 3.2 and Theorem 3.4.


\section{The main result}

We now describe our model in detail. This presentation follows
\cite{LO2} closely. Consider $n$ populations whose sizes or densities
$X_1(t),\ldots,X_n(t)$ at time $t$ are described by a system of $n$
stochastic differential equations of the form
\begin{align}
dX_i(t) &= b_i(t,X(t))dt+
      \sum_{j=1}^m \sigma_{ij}(t,X(t))dB_j(t);
       0\leq s\leq t\leq T \\
X_i(s) &= x_i\in\R\;; \qquad 1\leq i\leq n\;,
\end{align}
where $B(t)=(B_1(t),\ldots,B_m(t))$; $t\geq0$, $\omega\in\Omega$ is
$m$-dimensional Brownian motion on a filtered probability space $(\Omega, \mathcal{F}, \mathbb{F} := \{ \mathcal{F}_t \}_{t \geq 0} , P) $ and the differentials (i.e. the
corresponding integrals) are interpreted in the It\^o sense. We assume
that $b=(b_1,\ldots,b_n):\R^{1+n}\to\R^n$ and
$\sigma=(\sigma_{ij})_{1\leq i\leq n \hfill \atop 1\leq j\leq m}:\R^{1+n}\to
\R^{n\times m}$ are given continuous functions. We also assume that the
terminal time $T=T(\omega)$ has the form
\begin{equation}
T(\omega)=\inf\big\{ t>s; (t,X(t))\not\in S\big\}
\end{equation}
where $S\subset\R^{1+n}$ is a given set. For simplicity we will assume in
this paper that
\[
S=(0,T)\times U
\]
where $U$ is an open, connected set in $\R^n$.
We may interprete $U$ as the {\em survival set\/} and $T$ is the {\em
time of extinction\/} or simply the {\em closing/terminal\/} time.
\bigskip

We now introduce a {\em harvesting strategy\/} for this family of
populations:

A {\em harvesting strategy\/} $\gamma$ is a stochastic process
$\gamma(t)=\gamma(t,\omega)=(\gamma_1(t,\omega),\ldots,\gamma_n(t,\omega))
\in\R^n$ with the following properties:
\begin{eqnarray}
&&\hbox{For each $t\geq s$ $\gamma(t,\cdot)$ is measurable with respect to
       the $\sigma$-algebra ${\mathcal F}_t$ generated by } \\
&&\qquad \hbox{$\{B(s,\cdot); s\leq t\}$. In other words:
         $\gamma(\cdot)$ is $\mathbb{F}$-{\em adapted}.} \nonumber \\ [1ex]
&& \gamma_i(t,\omega)\ \hbox{is non-decreasing with respect to $t$,
       for a.a. $\omega\in\Omega$ and all $i=1,\ldots,n$} \\ [1ex]
&&t\to\gamma(t,\omega)\ \hbox{is right-continuous,
         for a.a. $\omega$} \\ [1ex]
&&\gamma(s,\omega)=0\ \hbox{for a.a. $\omega$}\;.
\end{eqnarray}
Component number $i$ of $\gamma(t,\omega),\gamma_i(t,\omega)$, represents
{\em the total amount harvested from population number $i$ up to time $t$}.

If we apply a harvesting strategy $\gamma$ to our family
$X(t)=(X_1(t),\ldots,X_n(t))$ of populations the harvested family
$X^{(\gamma)}(t)$ will satisfy the $n$-dimensional stochastic differential
equation
\begin{equation}
\begin{cases}
dX^{(\gamma)}(t)=b(t,X^{(\gamma)}(t))dt+\sigma(t,X^{(\gamma)}(t))
        dB(t)-d\gamma(t)\;;\quad s\leq t\leq T \\
X^{(\gamma)}(s^-)=x=(x_1,\ldots,x_n)\in\R^n\end{cases}
\end{equation}
We let $\Gamma$ denote the set of all harvesting strategies $\gamma$ such
that the corresponding system (2.7) has a unique strong solution
$X^{(\gamma)}(t)$ which does not explode in the time interval $[s,T]$ and
such that $X^{(\gamma)}(t)\in U$ for all $t \in [s,T]$.

Since we do not exclude immediate harvesting at time $t=s$, it is necessary
to distinguish between $X^{(\gamma)}(s)$ and
$X^{(\gamma)}(s^-)$:
Thus $X^{(\gamma)}(s^-)$ is the state right before harvesting
starts at time $t=s$, while
\[
X^{(\gamma)}(s)=X^{(\gamma)}(s^-)-\Delta\gamma
\]
is the state immediately after, if $\gamma$ consists of an immediate
harvest of size $\Delta\gamma$ at $t=s$.

Suppose that {\em the price per unit of population number\/} $i$, when
harvested at time $t$ and when the current size/density of the vector
$X^{(\gamma)}(t)$ of populations is $\xi=(\xi_1,\ldots,\xi_n)\in\R^n$, is
given by
\begin{equation}
\pi_i(t,\xi)\;;\qquad (t,\xi)\in S\;,\quad 1\leq i\leq n\;,
\end{equation}
where the $\pi_i:S\to\R$; $1\leq i\leq n$, are lower bounded continuous
functions. We call such prices {\em density-dependent\/} since they
depend on $\xi$. The total expected discounted utility harvested from
time $s$ to time $T$ is given by
\begin{equation}
J^{(\gamma)}(s,x):=E^{s,x} \Big[
\int\limits_{[s,T)} \pi(t,X^{(\gamma)}(t^-))\cdot d\gamma(t)\Big]
\end{equation}
where $\pi=(\pi_1,\ldots,\pi_n)$, $\pi\cdot d\gamma=\sum\limits_{i=1}^n
\pi_i d\gamma_i$ and $E^{s,x}$ denotes the expectation with respect to the
probability law $Q^{s,x}$ of the time-state process
\begin{equation}
Y^{s,x}(t)=Y^{\gamma,s,x}(t)=(t,X^{(\gamma)}(t))\;;\qquad t\geq s
\end{equation}
assuming that $Y^{s,x}(s^-)=x$.

The {\em optimal harvesting problem\/} is to find the {\em value
function\/} $\Phi(s,x)$ and an {\em optimal harvesting strategy\/}
$\gamma^\ast\in\Gamma$ such that
\begin{equation}
\Phi(s,x):=\sup_{\gamma\in\Gamma}
J^{(\gamma)}(s,x)=J^{(\gamma^\ast)}(s,x)\;.
\end{equation}
This problem differs from the problems considered in \cite{A1},
\cite{A3}, \cite{AS}, \cite{LO1} and \cite{LO2} in that the prices
$\pi_i(t,\xi)$ are allowed to be density-dependent. This allows for more
realistic models. For example, it is usually the case that if a type of
fish, say population number $i$, becomes more scarce, the price per unit
of this fish increases. Conversely, if a type of fish becomes abundant
then the price per unit goes down. Thus in this case the price
$\pi_i(t,\xi)=\pi_i(t,\xi_1,\ldots,\xi_n)$ is a {\em nonincreasing\/}
function of $\xi_i$. One can also have situations where $\pi_i(t,\xi)$
depends on all the other population densities $\xi_1,\ldots,\xi_n$ in a
similar way.

It turns out that if we allow the prices to be density-dependent, a number
of new -- and perhaps surprising -- phenomena occurs. The purpose of this
paper is not to give a complete discussion of the situation, but to consider
some illustrative examples.

\paragraph{Remark}
Note that we can also give the problem (2.12) an economic interpretation:
We can regard $X_i(t)$ as the value at time $t$ of an economic quantity
or asset and we can let $\gamma_i(t)$ represent the total amount paid in
dividends from asset number $i$ up to time $t$. Then $S$ can be
interpreted as the {\em solvency set}, $T$ as the {\em time of
bankruptcy\/} and $\pi_i(t,\xi)$ as the {\em utility rate\/} of dividends
from asset number $i$ at the state $(t,\xi)$. Then (2.12) becomes the
problem of finding the {\em optimal stream of dividends}. This
interpretation is used in \cite{JS} (in the density-independent utility
case). See also
\cite{LO2}.
\bigskip

In the following $H^0$ denotes the interior of a set $H$, $\bar{H}$
denotes its closure.

If $G\subset\R^k$ is an open set we let $C^2(G)$ denote the set of real
valued twice continuously differentiable functions on $G$. We let
$C_0^2(G)$ denote the set of functions in $C^2(G)$ with compact support
in $G$.

If we do not apply any harvesting, then the corresponding time-state
population process $Y(t)=(t,X(t))$, with $X(t)$ given by (2.1)--(2.2), is
an It\^o diffusion whose generator coincides on $C_0^2(\R^{1+n})$ with the
partial differential operator $L$ given by
\begin{equation}
Lg(s,x)=\frac{\partial g}{\partial s}(s,x)+
\sum_{i=1}^n b_i(s,x)\frac{\partial g}{\partial x_i}(s,x)
+\tfrac{1}{2}\sum_{i,j=1}^n
(\sigma\sigma^T)_{ij}(s,x)
\frac{\partial^2g}{\partial s\partial x}
\end{equation}
for all functions $g\in C^2(S)$.

The following result is a generalization to the multi-dimensional case of
Theorem~1 in \cite{A2} and a generalization to density-dependent prices of
Theorem~2.1 in \cite{LO2}. For completeness we give the proof.

\begin{theorem}\label{thm1}
Assume that
\begin{equation}
\pi(t,\xi)\ \hbox{is {\em nonincresing\/} with respect to
$\;\xi_1,\ldots,\xi_n$, for all $t$}\;.
\end{equation}
{\bf a)}\
Suppose $\varphi\geq0$ is a function in $C^2(S)$ satisfying the following
conditions
\begin{itemize}
\item[(i)]
\quad
$\frac{\partial\varphi}{\partial x_i}(t,x)\geq \pi_i(t,x)$\quad for
all
$(t,x)\in S$

\item[(ii)]
\quad
$L\varphi(t,x)\leq0$\quad for all $(t,x)\in S$.
\end{itemize}
\noindent
Then
\begin{equation}
\varphi(s,x)\geq\Phi(s,x)\qquad \hbox{for all $(s,x)\in S$}\;.
\end{equation}
{\bf b)}\
Define the {\em nonintervention region\/} $D$ by
\begin{equation}
D=\Big\{
(t,x)\in S;\frac{\partial\varphi}{\partial x_i}(t,x)>\pi_i(t,x)\
\hbox{for all $\;i=1,\ldots,n$}\Big\}.
\end{equation}
Suppose that, in addition to (i) and (ii) above,
\begin{itemize}
\item[(iii)]
\quad
$L\varphi(t,x)=0$\quad for all $(t,x)\in D$
\end{itemize}

\noindent
and that there exists a harvesting strategy $\hat{\gamma}\in\Gamma$ such
that the following, (iv)--(vii), hold:
\begin{itemize}
\item[(iv)]
\quad
$X^{(\hat{\gamma})}(t)\in\bar{D}$\quad for all $\;t\in[s,T]$

\item[(v)]
\quad
$\big(\frac{\partial\varphi}{\partial x_i}(t,X^{(\hat{\gamma})}(t))
-\pi_i(t,X^{(\hat{\gamma})}(t))\big)\cdot
d\hat{\gamma}_i^{(c)}(t)=0$;\quad $1\leq i\leq n$ (i.e.
$\hat{\gamma}_i^{(c)}$ increases only when $\frac{\partial\varphi}{\partial
x_i}=\pi_i$)

and
\item[(vi)]
\quad
$\varphi(t_k,X^{(\hat{\gamma})}(t_k))-\varphi(t_k,
X^{(\hat{\gamma})}(t_k^-))
=-\pi_i(t_k,X^{(\hat{\gamma})}(t_k^-))\cdot \Delta\hat{\gamma}(t_k)$

at all jumping times $t_k\in[s,T)$ of $\hat{\gamma}(t)$, where
\[
\Delta\hat{\gamma}(t_k)=\hat{\gamma}(t_k)-\hat{\gamma}(t_k^-)
\]
and
\item[(vii)]
\quad
$E^{s,x}\big[ \varphi(T_R,X^{(\hat{\gamma})}(T_R))\big]\to0$\quad
as $R\to\infty$

where
\[
T_R=T\wedge R\wedge \inf \big\{ t>s; |X^{(\hat{\gamma})}(t)|\geq
R\big\}\;;\qquad R>0\;.
\]
Then
\begin{equation}
\varphi(s,x)=\Phi(s,x)\qquad \hbox{for all $(s,x)\in S$}
\end{equation}
and
\[
\gamma^\ast:=\hat{\gamma}\quad \hbox{is an optimal harvesting strategy}.
\]
\end{itemize}
\end{theorem}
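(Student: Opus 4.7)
The plan is to apply It\^o's formula for semimartingales with jumps to $\varphi(t, X^{(\gamma)}(t))$ up to a localizing time $T_R$, turning assumptions (i) and (ii) into an upper bound on $J^{(\gamma)}(s,x)$ in part (a), and then observing that conditions (iii)--(vii) on $\hat\gamma$ are exactly what is needed to promote each inequality used in part (a) to an equality along $\hat\gamma$.

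For part (a), fix $\gamma \in \Gamma$ and decompose it as $\gamma = \gamma^{(c)} + \gamma^{(d)}$ into its continuous and pure-jump parts. Applying It\^o's formula to $\varphi$ along $X^{(\gamma)}$ between $s$ and $T_R$ gives
\begin{align*}
\varphi(T_R, X^{(\gamma)}(T_R)) = \varphi(s,x) &+ \int_s^{T_R} L\varphi(t, X^{(\gamma)}(t))\, dt + M_{T_R} \\
&- \int_s^{T_R} \nabla_x \varphi(t, X^{(\gamma)}(t)) \cdot d\gamma^{(c)}(t) \\
&+ \sum_{s \leq t_k \leq T_R} \bigl[\varphi(t_k, X^{(\gamma)}(t_k)) - \varphi(t_k, X^{(\gamma)}(t_k^-))\bigr],
\end{align*}
where $M$ is a local martingale arising from the $dB$ integral. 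I would rewrite each jump increment via the fundamental theorem of calculus as
\begin{equation*}
\varphi(t_k, X^{(\gamma)}(t_k)) - \varphi(t_k, X^{(\gamma)}(t_k^-)) = -\int_0^1 \nabla_x \varphi\bigl(t_k,\, X^{(\gamma)}(t_k^-) - \lambda\Delta\gamma(t_k)\bigr) \cdot \Delta\gamma(t_k)\, d\lambda,
\end{equation*}
and invoke (i) together with the monotonicity of $\pi$ in $\xi$: this lets me replace $\pi$ inside the $\lambda$-integral by the larger $\pi(t_k, X^{(\gamma)}(t_k^-))$, bounding the jump sum by $-\sum \pi(t_k, X^{(\gamma)}(t_k^-)) \cdot \Delta\gamma(t_k)$; similarly (i) yields $-\int \nabla_x\varphi \cdot d\gamma^{(c)} \leq -\int \pi \cdot d\gamma^{(c)}$ since $d\gamma^{(c)} \geq 0$. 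Combining with (ii) and taking expectations (a standard stopping argument kills $M_{T_R}$) produces
\begin{equation*}
\varphi(s,x) \geq E^{s,x}\Bigl[\int_{[s,T_R)} \pi(t, X^{(\gamma)}(t^-)) \cdot d\gamma(t)\Bigr] + E^{s,x}[\varphi(T_R, X^{(\gamma)}(T_R))].
\end{equation*}
Dropping the last term using $\varphi \geq 0$ and letting $R \to \infty$, justified by the lower bound on $\pi$ together with monotone/Fatou convergence, gives $\varphi(s,x) \geq J^{(\gamma)}(s,x)$; taking supremum over $\gamma \in \Gamma$ establishes (2.15).

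For part (b), I run the same computation for $\hat\gamma$ while insisting on equality at every step. Condition (iv) keeps $X^{(\hat\gamma)}$ in $\bar D$, where (iii) together with continuity of $L\varphi$ forces $L\varphi(t, X^{(\hat\gamma)}(t)) \equiv 0$; condition (v) converts $-\nabla_x\varphi \cdot d\hat\gamma^{(c)}$ into $-\pi \cdot d\hat\gamma^{(c)}$; and condition (vi) is precisely the jump-equality counterpart of the jump bound derived in (a). Letting $R \to \infty$ and invoking (vii) to dispose of the boundary term then yields $\varphi(s,x) = J^{(\hat\gamma)}(s,x)$, which combined with part (a) forces $\varphi = \Phi$ and optimality of $\gamma^\ast = \hat\gamma$. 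The main obstacle I anticipate is the clean derivation of the jump bound: the monotonicity hypothesis on $\pi$ is what allows the evaluation point inside the $\lambda$-integral to be moved to $X^{(\gamma)}(t_k^-)$, and this is exactly the bound that (vi) requires $\hat\gamma$ to saturate. A secondary technical matter is the $R \to \infty$ passage, where integrability of the reward must be justified from the lower bound on $\pi$ and the admissibility of $\gamma$.
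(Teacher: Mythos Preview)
Your argument is essentially identical to the paper's: both apply the Dol\'eans-Dade--Meyer formula up to $T_R$, use (i)--(ii) to produce the upper bound in part (a), and then saturate the inequalities via (iii)--(vii) for $\hat\gamma$ in part (b). Two small points: first, your phrase ``replace $\pi$ \ldots by the \emph{larger} $\pi(t_k,X^{(\gamma)}(t_k^-))$'' should read \emph{smaller}, since $X^{(\gamma)}(t_k^-)$ is the largest state along the segment and $\pi$ is nonincreasing (the direction of your final bound is nonetheless correct); second, your straight-line parametrization $X^{(\gamma)}(t_k^-)-\lambda\Delta\gamma(t_k)$ tacitly assumes this segment stays in $U$, whereas the paper writes the jump increment along a general smooth curve $y(r)$ in $U$ with $dy_i(r)\geq 0$, which covers nonconvex $U$ as well.
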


\begin{proof}
{\bf a)}\
Choose $\gamma\in\Gamma$ and $(s,x)\in S$. Then by It\^o's formula for
semimartingales (the Dol\'eans-Dade-Meyer formula) \cite[Th.~II.7.33]{P}
we have
\begin{eqnarray}
\lefteqn{
E^{s,x}[ \varphi(T_R,X^{(\gamma)}(T_R^-))]
        =E^{s,x}[\varphi(s,X^{(\gamma)}(s))]} \nonumber \\
&&+E^{s,x} \Big[
     \int\limits_s^{T_R} \frac{\partial\varphi}{\partial t}
     (t,X^{(\gamma)}(t))dt+\int\limits_{(s,T_R)} \sum_{i=1}^n
      \frac{\partial\varphi}{\partial x_i}(t,X^{(\gamma)}(t^-))
        dX_i^{(\gamma)}(t) \nonumber \\
&&+\sum_{i,j=1}^n\, \int\limits_s^{T_R}\tfrac{1}{2}(\sigma\sigma^T)_{ij}
      (t,X^{(\gamma)}(t))\frac{\partial^2\varphi}{\partial x_i\partial x_j}
       (t,X^{(\gamma)}(t))dt \nonumber \\
&&+\sum_{s<t_k<T_R} \Big\{
     \varphi(t_k,X^{(\gamma)}(t_k))
     -\varphi(t_k,X^{(\gamma)}(t_k^-))
      -\sum_{i=1}^n
      \frac{\partial\varphi}{\partial x_i}(t_k,X^{(\gamma)}(t_k^-))
     \Delta X_i^{(\gamma)}(t_k)\Big\} \Big],
\end{eqnarray}
where the sum is taken over all jumping times $t_k\in(s,T_R)$ of
$\gamma(t)$ and
\[
\Delta X_i^{(\gamma)}(t_k)=X_i^{(\gamma)}(t_k)
-X_i^{(\gamma)}(t_k^-)\;.
\]
Let $\gamma^{(c)}(t)$ denote the continuous part of $\gamma(t)$, i.e.
\[
\gamma^{(c)}(t)=\gamma(t)-\sum_{s\leq t_k\leq t}
\Delta \gamma(t_k)\;.
\]
Then, since $\Delta X_i^{(\gamma)}(t_k)=-\Delta \gamma_i(t_k)$ we see
that (2.18) can be written
\begin{eqnarray}
\lefteqn{E^{s,x}[\varphi(T_R,X^{(\gamma)}(T_R^-))]
       =\varphi(s,x)} \nonumber \\
&&+E^{s,x} \Big[ \int\limits_s^{T_R} \Big\{
      \frac{\partial\varphi}{\partial t} + \sum_{i=1}^n b_i
      \frac{\partial\varphi}{\partial x_i} +\tfrac{1}{2}\sum_{i,j=1}^n
        (\sigma\sigma^T)_{ij}\, \frac{\partial^2\varphi}{\partial
      x_i\partial x_j}\Big\} (t,X^{(\gamma)}(t))dt\Big] \nonumber \\
&&-E^{s,x} \Big[
     \int\limits_s^{T_R} \sum_{i=1}^n
     \frac{\partial\varphi}{\partial x_i}(t,X^{(\gamma)}(t))
      d\gamma_i^{(c)}(t)\Big] +E^{s,x} \Big[ \sum_{s\leq t_k<T_R}
       \Delta\varphi(t_k,X^{(\gamma)}(t_k))\Big]
\end{eqnarray}
where
\[
\Delta\varphi(t_k,X^{(\gamma)}(t_k))=
\varphi(t_k,X^{(\gamma)}(t_k))-\varphi(t_k,X^{(\gamma)}(t_k^-))\;.
\]
Therefore
\begin{eqnarray}
\lefteqn{\hspace*{-2em}
\varphi(s,x)=E^{s,x}[\varphi(T_R,X^{(\gamma)}(T_R^-))]
       -E^{s,x} \Big[ \int\limits_s^{T_R}
      L\varphi(t,X^{(\gamma)}(t))dt \Big]} \nonumber \\
&&+E^{s,x} \Big[ \int\limits_s^{T_R} \sum_{i=1}^n
       \frac{\partial\varphi}{\partial x_i}
      (t,X^{(\gamma)}(t))d\gamma_i^{(c)}(t)\Big] \nonumber \\
&&-E^{s,x} \Big[ \sum_{s\leq t_k<T_R}
\Delta\varphi(t_k,X^{(\gamma)}(t_k))\Big].
\end{eqnarray}
Let $y=y(r)$; $0\leq r\leq 1$ be a smooth curve in $U$ from
$X^{(\gamma)}(t_k)$ to
$X^{(\gamma)}(t_k^-)=X^{(\gamma)}(t_k)+\Delta\gamma(t_k)$. Then
\begin{equation}
-\Delta\varphi(t_k,X^{(\gamma)}(t_k))=
\int\limits_o^1 \nabla\varphi(t_k,y(r))dy(r)\;.
\end{equation}
We may assume that
\[
dy_i(r)\geq0\qquad \hbox{for all $\;i,r$}\;.
\]
Now suppose that (i) and (ii) hold. Then by (2.20) and (2.21) we have
\begin{align}
\varphi(s,x) \geq E^{s,x} & \Big[
      \int\limits_s^{T_R} \sum_{i=1}^n
      \pi_i(t,X^{(\gamma)}(t))d\gamma_i^{(c)}(t)\Big] \nonumber \\
&+E^{s,x} \Big[ \sum_{s\leq t_k<T_R} \Big(
\int\limits_0^1\, \sum_{i=1}^n \pi_i(t_k,y(r))dy_i(r)\Big)\Big]
\end{align}
Since we have assumed that $\pi_i(t,\xi)$ is {\em nonincreasing\/}
with respect to $\xi_1,\ldots,\xi_n$ we have
\[
\pi_i(t_k,X^{(\gamma)}(t_k^-))\leq \pi_i(t_k,y(r))\leq
\pi_i(t_k,X^{(\gamma)}(t_k))
\]
for all $i,k$ and $r\in[0,1]$. Hence
\begin{equation}
\int\limits_0^1 \pi_i(t_k,y(r))dy_i(r)\geq
\pi_i(t_k,X^{(\gamma)}(t_k^-))
\cdot \Delta\gamma_i(t_k)\;.
\end{equation}
Combined with (2.22) this gives
\begin{align}
\varphi(s,x) &\geq E^{s,x} \Big[
    \int\limits_0^{T_R} \pi(t,X^{(\gamma)}(t))d\gamma^{(c)}(t)
     +\sum_{s\leq t_k<T} \pi(t_k,X^{(\gamma)}(t_k^-))\cdot
     \Delta\gamma(t_k)\Big] \nonumber \\
&=E^{s,x} \Big[ \int\limits_{[s,T_R)}
\pi(t,X^{(\gamma)}(t^-))d\gamma(t)\Big].
\end{align}
Letting $R\to\infty$ we obtain $\varphi(s,x)\geq J^{(\gamma)}(s,x)$.
Since $\gamma\in\Gamma$ was arbitrary we conclude that (2.15) holds.
Hence a) is proved.
\bigskip

\noindent
{\bf b)}\
Next, suppose that (iii)--(vii) also hold. Then if we apply the
argument above to $\gamma=\hat{\gamma}$ we get in (2.20) the
following:
\begin{align*}
\varphi(s,x) &= E^{s,x}[\varphi(T_R,X^{(\hat{\gamma})}(T_R^-))] \\
&\qquad +E^{s,x} \Big[
     \int\limits_0^{T_R} \pi(t,X^{(\hat{\gamma})}(t))\cdot
      d\hat{\gamma}^{(c)}(t)
     +\sum_{s\leq t_k<T_R} \pi(t_k,X^{(\hat{\gamma})}(t_k^-))\cdot
      \Delta\hat{\gamma}(t_k)\Big] \\
&= E^{s,x}[\varphi(T_R,X^{(\hat{\gamma})}(T_R^-))]
      +E^{s,x} \Big[ \int\limits_{[s,T_R)}
      \pi(t,X^{(\hat{\gamma})}(t))\cdot d\hat{\gamma}(t)\Big] \\
&\longrightarrow J^{(\hat{\gamma})}(s,x)\qquad \hbox{as $R\to\infty$}\;.
\end{align*}
Hence $\varphi(s,x)=J^{(\hat{\gamma})}(s,x)\leq\Phi(s,x)$. Combining
this with (2.14) from a) we get the conclusion (2.16) of part b).
This completes the proof of Theorem~2.1.
\end{proof}

If we specialize to the 1-dimensional case with just one population
$X^{(\gamma)}(t)$ given by
\begin{equation}
\begin{cases}
dX^{(\gamma)}(t)=b(t,X^{(\gamma)}(t))dt
       +\sigma(t,X^{(\gamma)}(t))dB(t)-d\gamma(t)\;;\quad t\geq s \\
X^{(\gamma)}(s^-)=x\in\R \phantom{\int^i} \end{cases}
\end{equation}
then Theorem~2.1a) gets the form (see also \cite[Lemma~1]{A2})

\begin{cor}
Assume that
\begin{align}
& \xi\to\pi(t,\xi);\ \xi\in\R\quad
       \hbox{is nonincreasing for all $\;t\in[0,T]$} \\
& \varphi(t,x)\geq0\phantom{\int^i}\quad
     \hbox{is a function in $\;C^2(S)$ such that} \\
& \frac{\partial\varphi}{\partial x}(t,x)\geq \pi(t,x)\qquad
       \hbox{for all $\;(t,x)\in S$}
\end{align}
and
\begin{equation}
L\varphi(t,x)\leq0\qquad \hbox{for all $(\;t,x)\in S$}\;.
\end{equation}
Then
\begin{equation}
\varphi(s,x)\geq\Phi(s,x)\qquad \hbox{for all $\;(s,x)\in S$}\;.
\end{equation}
   \end{cor}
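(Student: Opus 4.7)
The plan is to deduce Corollary 2.2 as the direct one-dimensional specialization of Theorem~\ref{thm1}(a). First, I would match the hypotheses term by term: with $n=1$, the vector price $\pi=(\pi_1,\ldots,\pi_n)$ collapses to a scalar $\pi(t,\xi)$ and the componentwise monotonicity (2.14) becomes exactly (2.26); condition~(i) of Theorem~\ref{thm1} reduces to (2.28); condition~(ii) reduces to (2.29); and the standing requirement that $\varphi\ge 0$ belong to $C^2(S)$ is repeated in (2.27). Once the hypotheses are aligned, the conclusion (2.15) of Theorem~\ref{thm1}(a) gives precisely (2.30), and there is nothing further to do.

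If one prefers a self-contained derivation rather than a citation, the natural route is to repeat the first half of the proof of Theorem~\ref{thm1}. I would apply the Dol\'eans--Dade--Meyer form of It\^o's formula to $\varphi(t,X^{(\gamma)}(t))$ on $[s,T_R]$ for an arbitrary $\gamma\in\Gamma$, solve for $\varphi(s,x)$, and estimate the four resulting terms. Hypothesis (2.29) makes the generator contribution nonpositive; hypothesis (2.28) bounds the continuous harvesting term below by $\int_s^{T_R}\pi(t,X^{(\gamma)}(t))\,d\gamma^{(c)}(t)$; and each jump contribution $-\Delta\varphi(t_k,X^{(\gamma)}(t_k))$ is rewritten as $\int_0^1\varphi'(t_k,y(r))\,dy(r)$ along any nondecreasing parametrization $y$ from $X^{(\gamma)}(t_k)$ to $X^{(\gamma)}(t_k^-)$, whereupon the monotonicity (2.26) of $\pi$ yields the lower bound $\pi(t_k,X^{(\gamma)}(t_k^-))\,\Delta\gamma(t_k)$. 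Dropping the remaining nonnegative boundary term $E^{s,x}[\varphi(T_R,X^{(\gamma)}(T_R^-))]\ge 0$ and letting $R\to\infty$ yields $\varphi(s,x)\ge J^{(\gamma)}(s,x)$, after which taking the supremum over $\gamma\in\Gamma$ produces (2.30).

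The only conceptual point worth flagging is that in one dimension the auxiliary curve $y(r)$ used above is simply a nondecreasing parametrization of a real interval, so the two-sided sandwich (2.23) collapses to the elementary monotonicity of a Riemann--Stieltjes integral of a nonincreasing integrand against a nondecreasing integrator. Consequently no new technical obstacle appears relative to the proof of Theorem~\ref{thm1}(a); the only real task is the bookkeeping needed to verify the correspondence between the two sets of hypotheses.
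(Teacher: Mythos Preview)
Your proposal is correct and matches the paper's treatment exactly: the paper does not give a separate proof of Corollary~2.2 but simply presents it as the $n=1$ specialization of Theorem~\ref{thm1}(a), which is precisely your first paragraph. Your optional self-contained rewrite is just the one-dimensional instance of the proof already given for Theorem~\ref{thm1}(a), so nothing new is needed.
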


\section{Examples}

In this section we apply Theorem~2.1 or Corollary~2.2 to some special
cases.

\begin{example}
\rm

Suppose $X^{(\gamma)}(t)= (X_1^{(\gamma)}(t),X_2^{(\gamma)}(t))$ is given by
\begin{equation}
\begin{cases}
dX_i^{(\gamma)}(t)=\mu_i\,dt+\sigma_i\,dB_i(t)-d\gamma_i(t)\;; \quad t\geq s \\
X_i^{(\gamma)}(s)=x_i>0 \phantom{\int^i} \end{cases}
\end{equation}
where $\mu_i>0$ and $\sigma_i\not=0$ are constants; $i = 1,2,$ and $\gamma=(\gamma_1,\gamma_2)$.

We want to maximize the total discounted value of the harvest, given by
\begin{equation}
J^{(\gamma)}(s,x)=E^{s,x} \Big[
\int\limits_{[s,T)} e^{-\rho t} \{ g_1(X_1^{(\gamma)}(t^-))d\gamma_1(t)+g_2(X_2^{(\gamma)}(t^-))d\gamma_2(t)\Big]
\end{equation}
where $g_i:\R\to\R$ are given nonincreasing functions (the
density-dependent prices) and
\begin{equation}
T=\inf\big\{ t>s; \min(X_1^{(\gamma)}(t),X_2^{(\gamma)}(t))\leq 0\big\}
\end{equation}
is the time of extinction, i.e. $S=\{(t,x);x_i>0; i=1,2\}$. The corresponding 1-dimensional case with $g$ {\em
constant\/} was solved in \cite{JS}. Then it is optimal to do nothing if
the population is below a certain treshold $x^\ast>0$ and then harvest
according to {\em local time\/} of the downward reflected process
$\bar{X}(t)$ at $\bar{X}(t)=x^\ast$.

Now consider the case when
\begin{equation}
g_i(x)=\theta_i x^{-1/2},\qquad \hbox{i.e.}\quad
\pi_i(t,x)=e^{-\rho t}\theta_ix^{-1/2};\quad x>0\;,
\end{equation}
where $\theta_i >0$ are given constants; $i=1,2$.
Then the prices increase as the population sizes $x_i$ go to 0, so (2.24)
holds. Suppose we apply the ``{\em take the money and run\/}''-strategy
$\;\overset{\circ}{\!\gamma}$. This strategy empties the
whole population immediately. It can be described by
\begin{equation}
\overset{\circ}{\!\gamma}\,(s)=(X_1(s^-),X_2(s^-))=(x_1,x_2)\;.
\end{equation}
Such a strategy gives the harvest value
\begin{equation}
J^{(\overset{\circ}{\!\gamma})}(s,x)=e^{-\rho s} (\theta_1x_1^{-1/2}x_1+\theta_2x_2^{-1/2}x_2)
=e^{-\rho s}(\theta_1\sqrt{x_1}+\theta_2\sqrt{x_2})\;;\qquad x_i>0\;.
\end{equation}
However, it is unlikely that this is the best strategy because it does
not take into account that the prices increase as the population sizes
go down. So for the two populations $X_i(t); i=1,2,$ we try the following ``chattering policy'', denoted by
$\widetilde{\gamma}_i=\widetilde{\gamma}_i^{(m,\eta)}$, where $m$ is a fixed
natural number and $\eta>0$:

At the times
\begin{equation}
t_k=\Big(s+\frac{k}{m}\eta\Big)\wedge T\;; \qquad k=1,2,\ldots,m
\end{equation}
we harvest an amount $\Delta\widetilde{\gamma}_i(t_k)$ which is the
fraction $\frac{1}{m}$ of the current population. This gives the expected
harvest value
\begin{equation}
J^{(\tilde{\gamma}(m,\eta))}(s,x)=
E^{s,x} \Big[ \sum_{k=1}^m e^{-\rho t_k}
      [\theta_1 \big( X_1^{(\tilde{\gamma})}(t_k^-))^+ \big)^{-1/2}\Delta\widetilde{\gamma}_1(t_k)+\theta_2 \big(X_2^{(\tilde{\gamma})}(t_k^-))^+ \big)^{-1/2} \Delta\widetilde{\gamma}_2(t_k)] \Big]
\;,
\end{equation}
where we have used the notation
\[
x_i^+=\max(x_i,0)\;;\qquad x_i\in\R\;.
\]
Now let $\eta\to0,m\to \infty$. Then all the $t_k$'s converge to $s$ and we get
\begin{align}
&J^{(\tilde{\gamma}(m,0))}(s,x):= \lim_{\eta\to 0,m\to \infty}
      J^{(\tilde{\gamma}(m,\eta))}(s,x)\nonumber\\
&= \lim_{m\to \infty}e^{-\rho s} \big[ \sum_{k=1}^m
      \theta_1\Big( x_1-\frac{k}{m}x_1\Big)^{-1/2}\frac{1}{m}x_1+\sum_{k=1}^m
      \theta_2\Big( x_2-\frac{k}{m}x_2\Big)^{-1/2}\frac{1}{m}x_2\big] \nonumber \\
&= e^{-\rho s}\big[\theta_1 x_1^{\frac{1}{2}}\int_0^1 (1-y)^{-\frac{1}{2}}dy+\theta_2 x_2^{\frac{1}{2}}\int_0^1 (1-y)^{-\frac{1}{2}}dy\big]\nonumber\\
&=2 e^{-\rho s}\big[\theta_1 \sqrt{x_1}+\theta_2 \sqrt{x_2}\big]
\;.
\end{align}

We conclude that
\begin{equation}\label{eq3.10}
\sup_\gamma J^{(\gamma)}(s,x)\geq 2 e^{-\rho s}\big[\theta_1 \sqrt{x_1}+\theta_2 \sqrt{x_2}\big]\;.
\end{equation}
We call this policy of applying
$\widetilde{\gamma}^{(m,\eta)}$ in the limit as $\eta \to 0$ and
$m \to \infty$ the {\em policy of immediate chattering down to 0}. (This
limit does not exist as a strategy in $\Gamma$.)
  From \eqref{eq3.10} we conclude that
\begin{equation}
\Phi(s,x)\geq 2 e^{-\rho s}\big[\theta_1 \sqrt{x_1}+\theta_2 \sqrt{x_2}\big]\;.
\end{equation}
On the other hand, let us check if the function
\begin{equation}
\varphi(s,x):=2 e^{-\rho s}\big[\theta_1 \sqrt{x_1}+\theta_2 \sqrt{x_2}\big]
\end{equation}
satisfies the conditions of Theorem~2.1: Condition
(2.14) holds trivially, and (i) of Part a) holds, since
\[
\frac{\partial\varphi}{\partial x_i}(s,x)=e^{-\rho s}
\theta_1 x_1^{-1/2}=\pi_i(s,x)\;.\qquad \hbox{ }\;
\]
Now
\[
L=\frac{\partial}{\partial s}+\mu_1
\frac{\partial}{\partial
x_1}+\mu_2
\frac{\partial}{\partial
x_2}+\tfrac{1}{2}\sigma_1^2\frac{\partial^2}{\partial x_1^2}+\tfrac{1}{2}\sigma_2^2\frac{\partial^2}{\partial x_2^2},
\]
and therefore
\begin{align*}
L\varphi(s,x) &= 2e^{-\rho s} \big[ -\rho (\theta_1x_1^{1/2}+\theta_2 x_2^{1/2})
+\mu_1\theta_1\tfrac{1}{2}x_1^{-1/2}+\mu_2 \theta_2 \tfrac{1}{2}x_2^{-1/2}+\tfrac{1}{2}
\sigma_1^2\tfrac{1}{2}(-\tfrac{1}{2})\theta_1x_1^{-3/2}+\tfrac{1}{2}
\sigma_2^2\tfrac{1}{2}(-\tfrac{1}{2})x_2^{-3/2}\big] \\
&= -2\rho e^{-\rho s} \Big[ \theta_1 x_1^{-3/2} ( x_1^2-\frac{\mu_1}{2\rho}x_1
+\frac{\sigma_1^2}{8\rho})+\theta_2x_2^{-3/2} ( x_2^2-\frac{\mu_2}{2\rho}x_2
+\frac{\sigma_2^2}{8\rho})\Big].
\end{align*}
So (ii) of Theorem 2.1 a) holds if $\mu_i^2\leq 2\rho\sigma_i^2$ for $i=1,2$. By Theorem~2.1 we
conclude that $\varphi=\Phi$ in this case.
\end{example}

We have proved part a) of the following result:

\begin{theorem}
Let $X^{(\gamma)}(t)$ and $T$ be given by (3.1) and (3.3), respectively.
\bigskip

\noindent
{\bf a)}\
Assume that
\begin{equation}
\mu_i^2\leq 2\rho\sigma_i^2\;, \quad i=1,2.
\end{equation}
Then
\begin{align}
\Phi(s,x)&:=\sup_{\gamma\in\Gamma} E^{s,x} \Big[
\int\limits_{[s,T)} e^{-\rho t} \{ \theta_1 X_1^{(\gamma)}(t^-)^{-1/2}
d\gamma_1(t)+\theta_2 X_2^{(\gamma)}(t^-)^{-1/2}
d\gamma_2(t)\}\Big]\nonumber\\
&=2 e^{-\rho s}\big[\theta_1 \sqrt{x_1}+\theta_2 \sqrt{x_2}\big]\;.
\end{align}
This value is achieved in the limit if we apply the strategy
$\widetilde{\gamma}^{(m,\eta)}$ above with $\eta\to0$ and $m\to\infty$,
i.e. by applying the policy of immediate chattering down to 0.
\bigskip\\
\noindent
{\bf b)}\

Assume that
\begin{equation}
\mu_i^2>2\rho\sigma_i^2 ; \quad i=1,2.
\end{equation}
Then the value function has the form
\begin{equation}\label{eq3.16}
\Phi(s,x)= \begin{cases}
e^{-\rho s}\Big[C_1(e^{\lambda_1^{(1)}x_1}-e^{\lambda_2^{(1)}x_1})+C_2(e^{\lambda_1^{(2)}x_2}-e^{\lambda_2^{(2)}x_2})\Big]\;; & x_1\leq x_1^\ast ; x_2 \leq x_2^*\\
e^{-\rho s}\big[ 2\theta_1\sqrt{x_1}-2\theta_1\sqrt{x_1^\ast}+C_2(e^{\lambda_1^{(2)}x_2}-e^{\lambda_2^{(2)}x_2})+A_1\Big]\;; & x_1 > x_1^*, x_2 \leq x_2^*\\
e^{-\rho s}\Big[ C_1(e^{\lambda_1^{(1)}x_1}-e^{\lambda_2^{(1)}x_1})+2\theta_2\sqrt{x_2}-2\theta_2\sqrt{x_2^\ast}+A_2\Big]\;; & x_1 \leq x_1^*; x_2 > x_2^*\\
e^{-\rho s}\Big[ 2\theta_1\sqrt{x_1}-2\theta_1\sqrt{x_1^\ast}+2\theta_2\sqrt{x_2}-2\theta_2\sqrt{x_2^\ast}+A_1+A_2\Big]\;; & x_1 > x_1^*; x_2 > x_2^*
\end{cases}
\end{equation}
for constants $C_i>0$, $A_i>0$ and $x_i^\ast>0; i=1,2$ satisfying the following system of 6 equations
(see Remark below):
\begin{align}\label{eq3.17}
&C_i(e^{\lambda_1^{(i)}x_i^\ast}-e^{\lambda_2^{(i)} x_i^\ast})=A _i\phantom{\sum}; \quad i=1,2\nonumber\\
&C_i(\lambda_1^{(i)} e^{\lambda_1^{(i)}x_i^\ast}-
       \lambda_2^{(i)}e^{\lambda_2^{(i)} x_i^\ast})=(x_i^\ast)^{-1/2} \phantom{\sum}; \quad i=1,2\nonumber\\
&C_i( (\lambda_1^{(i)})^2 e^{\lambda_1^{(i)}x_i^\ast}-(\lambda_2^{(i)})^2 e^{\lambda_2^{(i)}x_i^\ast})
       =-\tfrac{1}{2}(x_i^\ast)^{-3/2}; \quad i=1,2,
\end{align}

where
\begin{equation}\label{eq3.18}
\lambda_1^{(i)}=\sigma_i^{-2}\big[
-\mu_i+\sqrt{\mu_i^2+2\rho\sigma_i^2}\,\big]>0\;,\quad
\lambda_2^{(i)}=\sigma_i^{-2}\big[ -\mu_i-\sqrt{\mu_i^2+2\rho\sigma_i^2}\,\big]<0\;.
\end{equation}
The corresponding optimal policy is the following, for $i=1,2$:
\begin{align}
&\hbox{If $\;x_i>x_i^\ast$ it is optimal to apply immediate chattering from
       $x_i$ down to $x_i^\ast$.}\phantom{\int} \\
&\hbox{if $\;0<x_i \leq x_i^\ast$ it is optimal to apply the harvesting equal
       to the {\em local time of}} \\
&\hbox{{\em the downward reflected process\/}
$\bar{X}_i(t)$ at $x_i^\ast$.} \nonumber
\end{align}

\noindent
{\bf c)}\
Assume that
\begin{equation}
\mu_1^2>2\rho\sigma_1^2 \text{  and  } \mu_2^2\leq 2\rho\sigma_2^2.
\end{equation}
Then the value function has the form
\begin{equation}
\Phi(s,x)= \begin{cases}
e^{-\rho s} \Big[ C_1(e^{\lambda_1x_1}-e^{\lambda_2x_1}) + 2\theta_2 \sqrt{x_2}\Big]\;; & 0\leq x_1<x_1^\ast \\
e^{-\rho s} \Big[ 2\sqrt{x_1}-2\sqrt{x_1^\ast}+A_1+ 2\theta_2 \sqrt{x_2} \Big]\;; & x_1^\ast\leq x_1
\end{cases}
\end{equation}
for constants $C_1>0$, $A_1>0$ and $x_1^\ast>0$ specified by the 3
equations
\begin{align}
&C_1(e^{\lambda_1x_1^\ast}-e^{\lambda_2 x_1^\ast})=A _1\phantom{\sum}\\
&C_1(\lambda_1e^{\lambda_1x_1^\ast}-
       \lambda_2e^{\lambda_2x_1^\ast})=(x_1^\ast)^{-1/2} \phantom{\sum} \\
&C_1(\lambda_1^2 e^{\lambda_1x_1^\ast}-\lambda_2^2e^{\lambda_2x_1^\ast})
       =-\tfrac{1}{2}(x_1^\ast)^{-3/2},
\end{align}

where
\begin{equation}
\lambda_1=\sigma_1^{-2}\big[
-\mu_1+\sqrt{\mu_1^2+2\rho\sigma_1^2}\,\big]>0\;,\quad
\lambda_2=\sigma_1^{-2}\big[ -\mu_1-\sqrt{\mu_1^2+2\rho\sigma_1^2}\,\big]<0\;.
\end{equation}
The corresponding optimal policy $\gamma^*= (\gamma_1^{*},\gamma_2^*)$ is described as follows: \\
\begin{align}
&\hbox{If $\;x_1>x_1^\ast$ the optimal $\gamma_1^*$ is to apply immediate chattering from
       $x_1$ down to $x_1^\ast$.}\phantom{\int} \\
&\hbox{if $\;0<x_1\leq x_1^\ast$ the optimal $\gamma_1^*$ is to apply the harvesting equal
       to the {\em local time of}} \\
&\hbox{{\em the downward reflected process\/}
$\bar{X}_1(t)$ at $x_1^\ast$.} \nonumber
\end{align}

The optimal policy $\gamma_2^{*}$ is to apply immediate chattering from $x_2$ down to 0.
\end{theorem}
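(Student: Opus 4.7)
The plan is to apply the verification Theorem~2.1 to the candidate $\varphi$ displayed in \eqref{eq3.16} (and in (3.22) for part c). The key structural observation is that the SDEs (3.1) are fully decoupled: each $X_i$ is driven by its own Brownian motion $B_i$ with coefficients depending only on $i$, so one naturally seeks a separable ansatz
\[
\varphi(s,x)=e^{-\rho s}\bigl[\varphi_1(x_1)+\varphi_2(x_2)\bigr].
\]
Then $L\varphi=e^{-\rho s}[\mathcal{L}_1\varphi_1+\mathcal{L}_2\varphi_2]$, where $\mathcal{L}_i:=\tfrac12\sigma_i^2\partial_{x_i}^2+\mu_i\partial_{x_i}-\rho$, and all the conditions of Theorem~2.1 decouple into two independent one-dimensional verifications of the type analysed in \cite{A2}.

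\textbf{Part b).} For each $i=1,2$ I would construct $\varphi_i$ in two pieces. On the continuation interval $\{0<x_i<x_i^\ast\}$ the HJB equation is the linear ODE $\mathcal{L}_i\varphi_i=0$, whose general solution is a linear combination of $e^{\lambda_{1,2}^{(i)}x_i}$ with $\lambda_{1,2}^{(i)}$ as in \eqref{eq3.18}; imposing the natural boundary condition $\varphi_i(0)=0$ (consistent with the extinction rule (3.3)) yields $\varphi_i(x_i)=C_i\bigl(e^{\lambda_1^{(i)}x_i}-e^{\lambda_2^{(i)}x_i}\bigr)$. On the chattering interval $\{x_i>x_i^\ast\}$, condition (i) of Theorem~2.1 can only hold with equality, so $\varphi_i'(x_i)=\theta_i x_i^{-1/2}$ and therefore $\varphi_i(x_i)=2\theta_i\sqrt{x_i}-2\theta_i\sqrt{x_i^\ast}+A_i$, with $A_i:=\varphi_i(x_i^\ast)$ the continuation value after chattering down to $x_i^\ast$. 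The three unknowns $(C_i,A_i,x_i^\ast)$ are then pinned down by \emph{value-matching}, \emph{smooth-fit} and \emph{super-contact} at $x_i^\ast$ (continuity of $\varphi_i$, $\varphi_i'$ and $\varphi_i''$), which are exactly the three equations in \eqref{eq3.17}. The super-contact condition is the new ingredient relative to the state-independent price setup of \cite{JS,LO2}; algebraically it is equivalent to continuity of $\mathcal{L}_i\varphi_i$ at $x_i^\ast$, and hence forces $L\varphi\le 0$ to continue to hold just outside the non-intervention region.

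With $\varphi$ so defined, the verification of Theorem~2.1 goes as follows. By construction $L\varphi=0$ on the non-intervention set $D=\{x_1<x_1^\ast\}\cap\{x_2<x_2^\ast\}$. On the chattering sides a computation mimicking the end of Example~3.1, combined with the super-contact identity, yields
\[
L\varphi(s,x)=-2\rho e^{-\rho s}\!\!\sum_{i\,:\,x_i>x_i^\ast}\!\!\theta_i\bigl[x_i^{-3/2}P_i(x_i)-(x_i^\ast)^{-3/2}P_i(x_i^\ast)\bigr],\qquad P_i(x):=x^2-\tfrac{\mu_i}{2\rho}x+\tfrac{\sigma_i^2}{8\rho}.
\]
Under $\mu_i^2>2\rho\sigma_i^2$ the quadratic $P_i$ has two positive real roots, and the system \eqref{eq3.17} is meant to select $x_i^\ast$ so that the bracket above is nonnegative on $[x_i^\ast,\infty)$, yielding (ii); condition (i) is an equality on the chattering sides by construction and holds with strict inequality on the continuation sides by convexity of the exponential ansatz. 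The candidate optimal strategy $\hat\gamma=(\hat\gamma_1,\hat\gamma_2)$ is defined coordinate-wise: if $x_i>x_i^\ast$, first perform the $(m,\eta)\to(\infty,0)$ chattering limit of Example~3.1 restricted to the interval $[x_i^\ast,x_i]$; thereafter let $\hat\gamma_i$ equal the local time at $x_i^\ast$ of the downward-reflected diffusion $\bar X_i$. Conditions (iv)--(v) of Theorem~2.1 are immediate from the definition; condition (vi), in its limiting chattering form, is the identity
\[
\varphi_i(x_i)-\varphi_i(x_i^\ast)=2\theta_i\bigl(\sqrt{x_i}-\sqrt{x_i^\ast}\bigr)=\int_{x_i^\ast}^{x_i}\pi_i(y)\,dy,
\]
which is precisely the limit of the Riemann sums in the chattering computation of Example~3.1; and (vii) follows from the sublinear growth of $\varphi$ together with $e^{-\rho T_R}\to 0$. \textbf{Part c)} is then the hybrid case: coordinate~1 is in the regime of part b) and the analysis above applies verbatim, while coordinate~2 is in the regime of part a)/Example~3.1 and chatters immediately all the way to $0$, contributing $2\theta_2\sqrt{x_2}$; by the separable structure the verification for (3.22) decomposes into these two one-dimensional verifications already carried out.

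\textbf{Main obstacle.} The technically delicate point is the algebraic analysis of the $3\times 3$ system \eqref{eq3.17}: one must show that it admits a unique positive solution $(C_i,A_i,x_i^\ast)$ and that this $x_i^\ast$ makes the bracket $x_i^{-3/2}P_i(x_i)-(x_i^\ast)^{-3/2}P_i(x_i^\ast)$ nonnegative on $[x_i^\ast,\infty)$ (equivalently, that $f_i(x):=x^{-3/2}P_i(x)$ attains its minimum on $[x_i^\ast,\infty)$ at $x_i^\ast$). Once this algebraic fact is established, the rest of the argument is routine manipulation of the separable ansatz combined with the verification machinery of Theorem~2.1.
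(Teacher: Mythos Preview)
Your proposal is correct and follows essentially the same route as the paper: define the separable candidate $\varphi(s,x)=e^{-\rho s}[\varphi_1(x_1)+\varphi_2(x_2)]$ as in \eqref{eq3.16}, impose the $C^2$ pasting conditions \eqref{eq3.17} at $x_i^\ast$, and then run the verification Theorem~2.1 with $\hat\gamma_i$ given by local time reflection at $x_i^\ast$ (preceded by chattering down to $x_i^\ast$ if $x_i>x_i^\ast$). The paper proceeds in the same order, deferring the existence of $(C_i,A_i,x_i^\ast)$ to the Remark via the intermediate value theorem on \eqref{eq3.31}, and simply asserting $L\varphi\le 0$ on the harvesting region without the explicit bracket computation you give.

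One small imprecision worth flagging: your justification that condition~(i) holds strictly on the continuation region ``by convexity of the exponential ansatz'' is not quite right. In fact the third equation in \eqref{eq3.17} forces $\varphi_i''(x_i^\ast)=-\tfrac12\theta_i(x_i^\ast)^{-3/2}<0$, so $\varphi_i$ is \emph{concave} near $x_i^\ast$. The correct argument that $\varphi_i'(x_i)>\theta_i x_i^{-1/2}$ on $(0,x_i^\ast)$ uses the ODE $\mathcal{L}_i\varphi_i=0$ together with the boundary behaviour as $x_i\downarrow 0$ (where $\varphi_i'(0^+)=C_i(\lambda_1^{(i)}-\lambda_2^{(i)})>0$ is finite while $\theta_i x_i^{-1/2}\to\infty$ \ldots\ wait, this goes the wrong way) --- more precisely one argues via the sign of $h_i(x):=\varphi_i'(x)-\theta_i x^{-1/2}$ using that $h_i(x_i^\ast)=h_i'(x_i^\ast)=0$ and the structure of $\mathcal{L}_i$. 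The paper itself glosses over this point, so you are in good company, but it belongs with the ``algebraic analysis'' you already flagged as the main obstacle.
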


\noindent
\begin{proof}
{\em b)}.\quad
First note that if we apply the policy of immediate chattering from $x_i$
down to $x_i^\ast$, where $0<x_i^\ast<x_i$, then the value of the harvested
quantity is
\begin{equation}
e^{-\rho s}\theta_i \int\limits_0^{x_i-x_i^\ast}(x_1-y)^{-1/2}dy=e^{-\rho s}\theta_i
       \int\limits_{x_i^\ast}^{x_i} u^{-1/2}du=
2e^{-\rho s}\theta_i\big(\sqrt{x_i}-\sqrt{x_i^\ast}\,\big)\;.
\end{equation}
This follows by the argument (3.7)--(3.12) above.

To verify \eqref{eq3.16}--\eqref{eq3.18}, first note that $\lambda_1^{(i)},\lambda_2^{(i)}$ are the roots of
the quadratic equation
\begin{equation}
-\rho+\mu_i\lambda+\tfrac{1}{2}\sigma_i^2\lambda^2=0\;.
\end{equation}
Hence, with $\varphi(s,x)$ defined to be the right hand side of (3.16) we
have
\begin{align}\label{eq3.28}
&L\varphi(s,x)=0 \qquad \hbox{for $\;x_1<x_1^\ast, x_2 < x_2^*$} \\
&L\varphi(s,x) \leq 0 \qquad \hbox{for $\;x_1>x_1^\ast \text{ or } x_2 > x_2^*$} \nonumber\\
\intertext{and}
&\varphi(s,0)=0\;.
\end{align}
Note that equations \eqref{eq3.17} imply that $\varphi$ is $C^2$ at $x_1=x_1^\ast$ and at $x_2 = x_2^*$.

We conclude that with this choice of
$C_i,A_i,x_i^\ast; i=1,2$ the function $\varphi(s,x)$ becomes a $C^2$ function and
the nonintervention region $D$ given by (2.16) is seen to be
\[
D=\{(s,x)=(s,x_1,x_2);0<x_1<x_1^*, 0 < x_2 < x_2^*\}\;.
\]
Thus we obtain that $\varphi$ satisfies conditions (i), (ii) of Theorem~2.1
and hence
\begin{equation}\label{eq3.30}
\varphi(s,x)\geq\Phi(s,x)\qquad \hbox{for all $\;s,x$}\;.
\end{equation}
Also, by (3.31) we know that (iii) holds.

Moreover, if $x_i\leq x_i^\ast$ it is well-known that the local time
$\hat{\gamma_i}$ at $x_i^\ast$ of the downward reflected process $\bar{X}_i(t)$
at $x_i^\ast$ satisfies (iv)--(vi). (See e.g. \cite{LO1} for more details.)
And (vii) follows from (3.16). By Theorem~2.1 b) we conclude that if
$x_i\leq x_i^\ast$ then $\gamma_i^\ast:=\hat{\gamma}_i$ is optimal for $i=1,2$ and
$\varphi(s,x)=\Phi(s,x)$. Finally, as seen above, if $x_i>x_i^\ast$ then immediate chattering from $x_i$ down to $x_i^\ast$ gives the value $2e^{-\rho s}\theta_i \big( \sqrt{x_i}-\sqrt{x_i^\ast}\,\big)+\Phi(s,x^\ast)$.
Hence
\[
\Phi(s,x)\geq 2e^{-\rho s} \theta_i \big(
\sqrt{x_i}-\sqrt{x_i^\ast}\,\big)+\Phi(s,x^\ast)\qquad \hbox{for
$\;x_i > x_i^\ast$}; i=1,2\;.
\]
Combined with \eqref{eq3.30} this shows that
\[
\varphi(s,x)=\Phi(s,x)\qquad \hbox{for all $\;s,x$}
\]
and the proof of b) is complete.\\
The proof of the mixed case c) is left to the reader.
\end{proof}
\noindent \emph{Remark}\\
Dividing the second equation of \eqref{eq3.17} by the third, we get the equation
\begin{equation}\label{eq3.31}
\frac{\lambda_1^{(i)} e^{\lambda_1^{(i)}x_i^\ast}-\lambda_2^{(i)} e^{\lambda_2^{(i)}x_i^\ast}}
{(\lambda_1^{(i)})^2 e^{\lambda_1^{(i)}x_i^\ast}-(\lambda_2^{(i)})^2  e^{\lambda_2^{(i)}x_i^\ast}}
=-2x_i^\ast\;.
\end{equation}
Since the left hand side of \eqref{eq3.31} goes to
$(\lambda_1^{(i)}+\lambda_2^{(i)})^{-1}< 0$ as $x_i^\ast \to 0^{+}$, and goes to
$(\lambda_1^{(i)})^{-1}>0$ as $x_i^\ast \to \infty$, we see by the intermediate value
theorem that there exist $x_i^\ast>0; i=1,2$ satisfying this equation. With these values of $x_i^*; i=1,2$ we see that there exists a unique solution $C_i, A_i; i=1,2$ of the system \eqref{eq3.17}.

\begin{example}
\rm
The Brownian motion example is perhaps not so good as a model of a biological stock,
since Brownian motion is a poor model for population growth. Instead,
let us consider a standard population growth model
(in the sense that it
can be generated from a classic birth-death-process), like the logistic
diffusion considered in
\cite{AS}. That is, let us consider the problem
\begin{equation}
V(0,x)=V(x) =
\sup_{\gamma\in\Gamma}E^{x}\int\limits_{[0,T)}
e^{-\rho t}X^{-1/2}(t^-)d\gamma(t)
\end{equation}
subject to
\begin{equation}
dX(t) = \mu X(t) (1 - K^{-1} X(t))dt + \sigma X(t)dB(t) - d\gamma(t),
\qquad X(0^-) = x>0\;,
\end{equation}
where $\mu > 0$, $K^{-1} > 0$, and $\sigma > 0$ are known constants,
$B(t)$ denotes a Brownian motion in $\R$, and $T = \inf\{t\geq 0:
X(t) \leq 0\}$ denotes the extinction time. We define the mapping
$H:\mathbb{R}_{+}\mapsto\mathbb{R}_{+}$ as
\begin{equation}
H(x) =
\int\limits_{0}^{x}y^{-1/2}dy = 2\sqrt{x}\;.
\end{equation}
The generator $A$ of $X(t)$ is given by
\[
A = \tfrac{1}{2}\sigma^{2}x^{2}\frac{d^{2}}{dx^{2}} + \mu x (1
-  K^{-1} x)\frac{d}{dx}
\]
and we find that
\begin{equation}
G(x): = ((A - \rho)H)(x) = \sqrt{x}\left[\mu - 2\rho - \sigma^{2}/4 -
\mu K^{-1} x\right]\;.
\end{equation}
Thus, if $\mu \leq 2\rho + \sigma^{2}/4$ then
by the same argument as in Example~3.2 we see that the optimal policy is
{\em immediate chattering down to 0}. We then have $T=0$, and the value
reads as
\begin{equation}
V(x) = 2\sqrt{x}\;.
\end{equation}
However, if $\mu > 2\rho + \sigma^{2}/4$, then we see that the mapping
$G(x)$ satisfies the conditions of Theorem 2 in \cite{A2} and, therefore
we find that there is a unique threshold $x^{*}$ satisfying the condition
\begin{equation}
x^{*}\psi''(x^{*}) + \tfrac{1}{2}\psi'(x^{*}) = 0\;,
\end{equation}
where $\psi(x)$ denotes the increasing fundamental solution of the
ordinary differential equation
$((A - \rho)u)(x) = 0$, that is, $\psi(x) = x^{\theta}M(\theta,
2\theta +
\frac{2\mu}{\sigma^2}, \frac{2\mu K^{-1}}{\sigma^2}x)$, where $\theta =
\frac{1}{2} -
\frac{\mu}{\sigma^2} + \sqrt{(\frac{1}{2} - \frac{\mu}{\sigma^2})^{2} +
\frac{2r}{\sigma^2}}\;$, and
$M$ denotes the confluent hypergeometric function. In this case, the
value  reads as
\begin{equation}
V(x) = \begin{cases}
2(\sqrt{x} - \sqrt{x^{*}}) + \sqrt{x^{*}}(\mu(1-K^{-1} x^{*}) -
              \sigma^{2}/4)/r,
        &x\geq x^{*} \\
\frac{\psi(x)}{\sqrt{x^{*}}\psi'(x^{*})}\;, \phantom{\int^{I^i}}
        &x < x^{*}. \end{cases}
\end{equation}
Especially, the value is a solution of the variational inequality
\[
\min\{((\rho - A)V)(x), V'(x) - x^{-1/2}\} = 0.
\]
\end{example}
\bigskip

We summarize this as follows:

\begin{theorem}
{\bf a)}\
Assume that
\begin{equation}
\mu\leq 2\rho+\sigma^2/4\;.
\end{equation}
Then the value function $V(x)$ of problem (3.29) is
\begin{equation}
V(x)=2\sqrt{x}\;.
\end{equation}
This value is obtained by immediate chattering down to 0.
\bigskip

\noindent
{\bf  b)}\
Assume that
\begin{equation}
\mu> 2\rho+\sigma^2/4\;.
\end{equation}
Then $V(x)$ is given by (3.35). The corresponding optimal policy is
immediate chattering from $x$ down to $x^\ast$ if $x>x^\ast$, and local
time at $x^\ast$ of the downward reflected process $\bar{X}(t)$ at
$x^\ast$ if $x<x^\ast$, where $x^\ast$ is given by (3.34).
\end{theorem}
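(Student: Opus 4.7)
For part (a) I plan to apply Corollary~2.2 with the candidate value $\varphi(s,x) = 2e^{-\rho s}\sqrt{x}$. This $\varphi$ is nonnegative and $C^2$ on $S$, and since $\partial_x\varphi(s,x) = e^{-\rho s} x^{-1/2} = \pi(s,x)$, condition (2.28) holds with equality. For (2.29) I use the identity $L\varphi(s,x) = e^{-\rho s}G(x)$ with $G$ from (3.32); the hypothesis $\mu \leq 2\rho+\sigma^{2}/4$ makes the bracket in $G(x)=\sqrt{x}[\mu-2\rho-\sigma^{2}/4-\mu K^{-1}x]$ nonpositive on $(0,\infty)$, so $L\varphi\leq 0$, and Corollary~2.2 gives $V(x)\leq 2\sqrt{x}$. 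For the matching lower bound I would rerun the chattering argument of Example~3.2: the policy $\widetilde{\gamma}^{(m,\eta)}$ that removes the fraction $1/m$ of the current stock at each of $m$ equally spaced times in $[0,\eta]$ yields, as first $\eta\downarrow 0$ and then $m\to\infty$, the Riemann-sum limit $\int_{0}^{x} y^{-1/2}\,dy = 2\sqrt{x}$; the logistic drift and diffusion contribute only error terms that vanish with $\eta$ thanks to path continuity of $X$.

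For part (b) I would take $\varphi(s,x) = e^{-\rho s}V(x)$, with $V$ from (3.35), and verify the hypotheses (i)--(vii) of Theorem~2.1. First, the threshold $x^*>0$ is produced by Theorem~2 of \cite{A2} applied to $G$: under $\mu > 2\rho+\sigma^2/4$, condition (3.34) admits a unique positive solution via an intermediate-value and monotonicity argument on $\psi$ and its derivatives. The two branches of $V$ in (3.35) are joined at $x^*$ by continuity and by $V'(x^{*-}) = (x^*)^{-1/2}$; the second-order smooth-fit (3.34) then automatically upgrades $V$ to a $C^2$ function at $x^*$. On $D=(0,x^*)$ we have (iii) $L\varphi=0$ because $\psi$ solves $(A-\rho)\psi=0$. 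Condition (i) is an equality on $[x^*,\infty)$; on $(0,x^*)$ it reduces to $\psi'(x)/[\sqrt{x^*}\,\psi'(x^*)] \geq x^{-1/2}$, which follows from the observation that (3.34) identifies $x^*$ as a critical point of $x\mapsto \sqrt{x}\,\psi'(x)$. For (ii) on $(x^*,\infty)$, one computes $LV(x^*)=0$ by the choice of the additive constant in (3.35), and then checks $\tfrac{d}{dx}LV(x) \leq 0$ for $x>x^*$ using (3.32) together with $\mu > 2\rho+\sigma^2/4$. This last sign check, which genuinely exploits the logistic drift rather than just an inequality as in Theorem~3.2, is the computational step I expect to be most delicate.

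To conclude I would exhibit the optimal $\hat{\gamma}$. For $0<x\leq x^*$ take $\hat{\gamma}$ to be the local time at $x^*$ of the downward reflected logistic diffusion; existence is standard (cf.~\cite{LO1}), and conditions (iv)--(v) hold because the reflection acts only when $X=x^*$, where $\partial_x\varphi=\pi$ by the $C^1$-fit. Since this $\hat{\gamma}$ is continuous, (vi) is vacuous. For $x>x^*$ I prefix the policy above by the instantaneous jump $\Delta\hat{\gamma}(s) = x-x^*$; then (vi) holds because $\varphi(s,x)-\varphi(s,x^*) = e^{-\rho s}\int_{x^*}^{x} u^{-1/2}\,du = 2e^{-\rho s}(\sqrt{x}-\sqrt{x^*})$, which matches the integrated jump term of Theorem~2.1(vi) parametrised by the straight line $y(r) = x - r(x-x^*)$. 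Finally (vii) follows because under $\hat{\gamma}$ the process is bounded by $x^*$ for $t>s$, whence $\varphi(T_R,X^{(\hat{\gamma})}(T_R))\leq 2e^{-\rho T_R}\sqrt{x^*}\to 0$ as $R\to\infty$. Theorem~2.1(b) then yields $\varphi=\Phi$, which is (3.35).
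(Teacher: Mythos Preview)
Your treatment of part (a) and most of part (b) follows the paper's strategy and is essentially correct. The genuine gap is in how you handle the region $x>x^\ast$ in part (b). You propose to prefix the local-time strategy by a \emph{single instantaneous jump} $\Delta\hat\gamma(s)=x-x^\ast$ and then claim that condition (vi) of Theorem~2.1 holds because $\varphi(s,x)-\varphi(s,x^\ast)=e^{-\rho s}\int_{x^\ast}^{x}u^{-1/2}\,du$. But condition (vi) is \emph{not} an integral along the jump path: it reads
\[
\varphi(t_k,X(t_k))-\varphi(t_k,X(t_k^-))=-\pi(t_k,X(t_k^-))\,\Delta\hat\gamma(t_k),
\]
with $\pi$ evaluated at the \emph{pre-jump} state. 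For your jump this would require $2(\sqrt{x}-\sqrt{x^\ast})=x^{-1/2}(x-x^\ast)$, which fails whenever $x\ne x^\ast$. Equivalently, the performance functional (2.10) credits a single lump $\Delta\gamma=x-x^\ast$ with only $e^{-\rho s}x^{-1/2}(x-x^\ast)$, which is strictly less than $2e^{-\rho s}(\sqrt{x}-\sqrt{x^\ast})$. So your proposed $\hat\gamma$ is strictly suboptimal and cannot give $J^{(\hat\gamma)}=\varphi$; the equality in Theorem~2.1(b) fails for it.

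This is exactly why the paper (as in the proof of Theorem~3.2(b)) does \emph{not} try to exhibit an admissible $\hat\gamma\in\Gamma$ on $\{x>x^\ast\}$. Instead it argues in two pieces: first Theorem~2.1(a) gives $\varphi\geq\Phi$ globally; second, Theorem~2.1(b) with the local-time strategy yields $\varphi(s,x^\ast)=\Phi(s,x^\ast)$, and then the \emph{chattering} policy from $x$ down to $x^\ast$ (a limit of strategies, not a member of $\Gamma$) produces the lower bound
\[
\Phi(s,x)\;\geq\;2e^{-\rho s}\big(\sqrt{x}-\sqrt{x^\ast}\big)+\Phi(s,x^\ast)=\varphi(s,x),
\]
closing the gap. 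You should replace the jump step by this chattering lower-bound argument; your own part (a) already contains the relevant Riemann-sum computation.
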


\section{Discussion on a Special Case}

Our verification Theorem \ref{thm1} covers a large class of state dependent singular stochastic control problems arising in the literature on the rational management of renewable resources.
It is worth emphasizing that there is an interesting subclass (including the case of Example 3.1) of problems where we can utilize our results in order to provide both a lower as well as an upper boundary for the maximal attainable expected cumulative harvesting yield. In order to shortly describe this case,  assume that the underlying dynamics are time homogeneous and independent of each other and, accordingly, that the drift coefficient satisfies $b_i(t,x)=b(x_i)$ and that the volatility coefficient, in turn, satisfies $\sigma_i(t,x)=\sigma_i(x_i)$. Assume also that the price $\pi_i(t,x)=\pi_i(x_i)$ per unit of harvested stock $x_i\in \mathbb{R}_+$ is nonnegative, nonincreasing, and continuously differentiable as a function of the prevailing stock. Given these assumptions, define the nondecreasing and concave function
$$
\Pi_i(x_i) = \int_0^{x_i}\pi_i(v)dv\geq \pi_i(x_i)x_i.
$$
It is now a straightforward example in basic analysis to show by relying on a chattering policy described in our Example 3.1. that in the present case we have 
\begin{align}
J^{(\tilde{\gamma}(m,0))}(0,x)
= \sum_{i=1}^{n}\Pi_i(x_i)\nonumber.
\end{align}
Consequently, under the assumed time homogeneity we observe that the maximal attainable expected cumulative harvesting yield satisfies the inequality
\begin{equation}\label{ineq1}
\sup_\gamma J^{(\gamma)}(0,x)\geq \sum_{i=1}^{n}\Pi_i(x_i)\;.
\end{equation}
On the other hand, utilizing the generalized It{\^o}-Döblin-formula to the mapping $\Pi_i$, invoking the nonnegativity of the value $\Pi_i$,
and reordering terms
yields
\begin{eqnarray}
\Pi_{i}(x_i) &\geq&  - E_{x}\int_{0}^{T_N^{*}}
e^{-\rho s}(\mathcal{G}_\rho^i\Pi_{i})(X_i(s))ds
+ E_{x}\int_{0}^{T_N^{*}} e^{-\rho s}\pi_{i}(X_i(s))d\gamma_i(s) \nonumber\\
&-& E_{x}\sum_{0 \leq s \leq T_N^{*}} e^{-\rho s}[\Pi_{i}(X_i(s)) - \Pi_{i}(X_i(s-)) -
\pi_{i}'(X_i(s-))\Delta X_i(s)]\nonumber,
\end{eqnarray}
where $T^{*}_N$ is an increasing sequence of almost surely finite stopping times converging to $T$ and
$$
(\mathcal{G}_\rho^i\Pi_{i})(x) = \frac{1}{2}\sigma_i^2(x)\pi_i'(x)+b_i(x)\pi_i(x)-\rho\Pi_i(x).
$$
The concavity of the mapping $\Pi_i$ then implies that
$$
\Pi_i(X_i(s)) \leq \Pi_i(X_i(s-)) +\pi_i(X(s-))(X_i(s) - X_i(s-)) = \Pi_i(X_i(s-)) - \pi_i(X_i(s-))\Delta X_i(s).
$$
Hence, we find that for any admissible harvesting strategy $\gamma_i$  we have
\begin{eqnarray*}
E_{x}\int_{0}^{T_N^{*}} e^{-\rho s}\pi_{i}(X_i(s))d\gamma_i(s) \leq  \Pi_{i}(x_i)+ E_{x}\int_{0}^{T_N^{*}}
e^{-\rho s}(\mathcal{G}_\rho^i\Pi_{i})(X_i(s))ds.
\end{eqnarray*}
Summing up the individual values then finally yields
\begin{eqnarray*}
\sum_{i=1}^{n}E_{x}\int_{0}^{T_N^{*}} e^{-\rho s}\pi_{i}(X_i(s))d\gamma_i(s) \leq  \sum_{i=1}^{n}\Pi_{i}(x_i)+ E_{x}\int_{0}^{T_N^{*}}
e^{-\rho s}\sum_{i=1}^{n}(\mathcal{G}_\rho^i\Pi_{i})(X_i(s))ds.
\end{eqnarray*}
Letting $N\uparrow \infty$ and invoking monotone convergence then shows that in the present setting
\begin{eqnarray}
\sup_\gamma J^{(\gamma)}(0,x) &\leq&  \sum_{i=1}^{n}\Pi_{i}(x_i)+ \sup_\gamma E_{x}\int_{0}^{T}
e^{-\rho s}\sum_{i=1}^{n}(\mathcal{G}_\rho^i\Pi_{i})(X_i(s))ds.
\end{eqnarray}
Consequently, in the time homogeneous and independent setting the value which can be attained by a chattering policy can be utilized for the derivation of
both a lower as well as an upper boundary for the value of the optimal harvesting policy. Moreover, in case the generators $(\mathcal{G}_\rho^i\Pi_{i})(X_i(s))$ are bounded above by $M_i$ we observe that
\begin{eqnarray}
\sup_\gamma J^{(\gamma)}(0,x) &\leq&  \sum_{i=1}^{n}\Pi_{i}(x_i)+ \sum_{i=1}^{n}\frac{M_i}{\rho}\left(1-E_{x}[e^{-\rho T}]\right).
\end{eqnarray}

For example, if the underlying evolves as in our 2-dimensional BM example 3.1, we observe that $$
(\mathcal{G}_\rho^i\Pi_i)(x)=x^{-3/2}\theta_i\left(\mu_i x - \frac{\sigma_i^2}{4}-2\rho x^2\right).
$$
Hence, $(\mathcal{G}_\rho^i\Pi_i)(x) \leq (\mathcal{G}_\rho^i\Pi_i)(\tilde{x}_i)$, where
$$
\tilde{x}_i=-\frac{\mu_i}{4\rho}+\frac{1}{4\rho}\sqrt{\mu_i^2+6\sigma_i^2\rho}.
$$
Consequently, we have that
\begin{equation*}
\sup_\gamma J^{(\gamma)}(s,x)\leq 2 e^{-\rho s}\big[\theta_1 \sqrt{x_1}+\theta_2 \sqrt{x_2}\big]+e^{-\rho s}\left((\mathcal{G}_\rho^1\Pi_1)(\tilde{x}_1)+(\mathcal{G}_\rho^2\Pi_2)(\tilde{x}_2)\right)(1-E\left[e^{-\rho T}\right])\;.
\end{equation*}

\end{document}